\documentclass[draft]{amsart}
\usepackage{amsmath, amssymb}

\newtheorem{thm}{Theorem}
\newtheorem{lem}[thm]{Lemma}
\newtheorem{cor}[thm]{Corollary}
\newtheorem*{conj}{Conjecture}

\theoremstyle{remark}
\newtheorem{rmk}[thm]{Remark}

\numberwithin{thm}{section}
\numberwithin{equation}{section}

\newcommand{\cv}{\mathbf{C}}
\newcommand{\zv}{\mathbf{Z}}

\newcommand{\rv}{\mathbf{R}}
\newcommand{\nv}{\mathbf{N}}
\newcommand{\aut}{\textup{Aut}}

\begin{document}

\subjclass[2010]{32A07, 32H02}
\keywords{quasi-circular domain; automorphism}

\title{On automorphisms of quasi-circular domains fixing the origin}

\author{Feng Rong}

\address{Department of Mathematics, Shanghai Jiao Tong University, 800 Dong Chuan Road, Shanghai, 200240, P.R. China}
\email{frong@sjtu.edu.cn}

\thanks{The author is partially supported by the National Natural Science Foundation of China (Grant No. 11371246).}

\begin{abstract}
It is known that automorphisms of quasi-circular domains fixing the origin are polynomial mappings. By introducing the so-called resonance order and quasi-resonance order, we provide a uniform upper bound for the degree of such polynomial automorphisms. As a particular consequence of our result, we obtain a generalization of the classical Cartan's theorem for circular domains to the quasi-circular case.
\end{abstract}

\maketitle

\section{Introduction}

Let $D$ be a bounded domain in $\cv^n$ containing the origin. We say that $D$ is a \textit{quasi-circular} domain if $D$ is invariant under the mapping
$$\rho:\cv^n\rightarrow \cv^n;\ \ \ (z_1,\cdots,z_n)\mapsto (e^{im_1\theta}z_1,\cdots,e^{im_n\theta}z_n),\ \ \ \theta\in \rv,$$
where $m_i\in \zv^+$. If $m_1=\cdots=m_n$, $D$ is called a circular domain. We call $(m_1,\cdots,m_n)$ the \textit{weight} of $D$. We will assume without loss of generality that $m_1\le m_2\le \cdots \le m_n$ and $\textup{gcd}(m_1,\cdots,m_n)=1$.

Let $f$ be an automorphism of $D$ fixing the origin. When $D$ is a circular domain, the well-known Cartan's theorem asserts that $f$ must be a linear mapping (\cite {Cartan}, see also \cite[Ch. 5, Prop. 2]{N:SCV}). In \cite{K:auto}, Kaup showed that in the quasi-circular case $f$ must be a polynomial mapping. Our main result gives a uniform upper bound for the degree of such polynomial mappings.

\begin{thm}\label{T:main}
Let $D$ be a bounded quasi-circular domain in $\cv^n$ containing the origin and $f$ an automorphism of $D$ fixing the origin. Then $f$ is a polynomial mapping with degree less than or equal to the quasi-resonance order. More precisely, writing $f=(f_1,\cdots,f_n)$, the degree of each $f_i$ is less than or equal to the $i$-th quasi-resonance order.
\end{thm}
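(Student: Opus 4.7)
\medskip
\noindent\emph{Proof plan.}
My strategy is to exploit the conjugation identity $\phi_\theta\circ f = f\circ \rho_\theta$ for the one-parameter family of automorphisms $\phi_\theta := f\circ \rho_\theta\circ f^{-1}\in \aut(D,0)$. Writing the Taylor expansion $f_i(z)=\sum_\alpha a^i_\alpha z^\alpha$ and introducing the weighted degree $\langle m,\alpha\rangle:=\sum_j m_j\alpha_j$, the right-hand side has $i$-th component $\sum_\alpha e^{i\langle m,\alpha\rangle\theta}a^i_\alpha z^\alpha$. The theorem amounts to saying that the multi-indices $\alpha$ with $a^i_\alpha\ne 0$ must lie in the set of ``quasi-resonant'' indices for the $i$-th component, whose maximum value of $|\alpha|$ is by definition the $i$-th quasi-resonance order.

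The first step is to use the boundedness of $D$ to bring $\aut(D,0)$ into play as a compact real Lie group; the circle $\{\rho_\theta\}_{\theta\in\rv}$ and the one-parameter subgroup $\{\phi_\theta\}_{\theta\in\rv}$ together generate a compact torus subgroup $T\subset \aut(D,0)$. I would then decompose $f_i=\sum_{d\ge 1}f_i^{[d]}$ into weighted-homogeneous pieces, characterised by $f_i^{[d]}(\rho_\theta z)=e^{id\theta}f_i^{[d]}(z)$. The identity above, together with the analogous identity obtained by composing with $f^{-1}$, translates into matching conditions between the weighted degrees $d$ appearing in $f_i$ and the eigenvalues $\{e^{im_j\theta}\}_{j=1}^n$ of the linear part $L R_\theta L^{-1}$ of $\phi_\theta$, where $L=Df(0)$ and $R_\theta=D\rho_\theta(0)$. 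Kaup's polynomiality theorem guarantees a finite range of non-zero $f_i^{[d]}$ to start with; the task is to sharpen the range.

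The main step, and the place I expect the chief difficulty to lie, is showing that any non-vanishing $f_i^{[d]}$ with $d$ exceeding the $i$-th quasi-resonance order is incompatible with $\phi_\theta$ lying in the compact torus $T$. The plan is to expand both sides of $\phi_\theta\circ f=f\circ \rho_\theta$ in weighted-homogeneous components and to extract the top-weighted-degree terms. The bare resonance relation $\langle m,\alpha\rangle=m_i$ accounts for the direct matching of a monomial in $f_i$ with the linear part of $\phi_\theta$, and gives the resonance order. The \emph{quasi}-resonance relations enlarge this set to include further matchings that become available when the weighted-homogeneous expansion of $f^{-1}$ is inserted into the conjugation, producing compositional cross-terms between components of different weight. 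Careful book-keeping of exactly which weighted degrees can be reached in this iterative way should yield the $i$-th quasi-resonance order as the sharp upper bound on $|\alpha|$ for monomials in $f_i$; any larger $|\alpha|$ would force some iterate of $\phi_\theta$ out of the compact torus $T$.

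As a sanity check, in the circular case $m_1=\cdots=m_n$ the resonance equation $\langle m,\alpha\rangle=m_i$ has no solutions with $|\alpha|\ge 2$, and no additional quasi-resonance relations can appear, so both orders collapse to $1$ and the theorem recovers Cartan's classical linearity theorem as a special case.
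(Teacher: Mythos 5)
Your proposal has a genuine gap at precisely the step you flag as the main difficulty, and I do not believe the route you have chosen can close it. The quasi-resonance order $\nu_i$ is not an invariant of the weight $(m_1,\dots,m_n)$ alone: it is defined through the reproducing polynomials $p_\alpha$ of the domain, i.e.\ the $m$-homogeneous elements of $H^2(D)$ representing the functionals $h\mapsto \frac{\partial^\alpha}{\partial z^\alpha}h(0)$, via the quantity $d(\alpha)=\min\{|\beta|: b^\alpha_\beta\neq 0\}$ in the expansion $p_\alpha=\sum_{m\cdot\beta=m\cdot\alpha}b^\alpha_\beta z^\beta$. Your plan works entirely with the conjugation identity $\phi_\theta\circ f=f\circ\rho_\theta$, the compact group $\aut(D,0)$, and the weighted-homogeneous decomposition of $f$ --- data that see only the weight and the group action, not the $L^2$ geometry of $D$. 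A purely representation-theoretic bookkeeping of which weighted degrees can be reached by the conjugation can therefore at best yield a bound expressed in terms of the weight (essentially the resonance order $\mu_i$), which is exactly the content of the paper's open \emph{Conjecture}, not of the theorem. Without introducing the polynomials $p_\alpha$ you cannot even state, let alone prove, the bound $\deg f_i\le\nu_i$. A secondary issue: the two circle subgroups $\{\rho_\theta\}$ and $\{\phi_\theta\}=\{f\rho_\theta f^{-1}\}$ need not commute, so they need not generate a compact \emph{torus}; the subgroup they generate can be nonabelian, which undermines the eigenvalue-matching scheme as stated.

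For contrast, the paper's argument is an $L^2$ (Bell-type) argument, not a group-theoretic one. One first shows via the transformation rule $\langle u(\phi\circ f),\psi\rangle_D=\langle\phi,U(\psi\circ F)\rangle_D$ that the Jacobian $u$ of $f$ is constant, then writes $z_i=\sum_{\beta\in E_i}c^i_\beta p_\beta$ with $E_i=\{\beta: m\cdot\beta=m_i\}$ and computes $\langle uf_i,z^\alpha\rangle_D=\sum_{\beta\in E_i}c^i_\beta\,\partial^\beta\{UF^\alpha\}(0)$, which vanishes for $|\alpha|>\mu_i$ because $F(0)=0$. Orthogonality of $f_i$ to all $z^\alpha$ with $|\alpha|>\mu_i$ then forces $\langle f_i,p_\alpha\rangle_D=0$ for $\alpha\in P_{\mu_i}$, and the bound $\deg f_i\le\nu_i$ falls out of the definition of $Q_{\mu_i}$. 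Note that this argument also reproves Kaup's polynomiality result rather than assuming it, whereas your plan takes it as input. If you want to salvage your approach, the honest conclusion it can aim for is the weaker statement in Remark \ref{R:commute} (the case where $f$ commutes with $\rho$), where the weighted-degree matching does work and gives the resonance order directly.
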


As a corollary to our main result above, we have the following linearity result, which is a generalization of the classical Cartan's theorem for circular domains to the quasi-circular domains.

\begin{cor}\label{C:linear}
Let $D$ be a bounded quasi-circular domain in $\cv^n$ containing the origin and $f$ an automorphism of $D$ fixing the origin. If the weight of $D$ has resonance order equal to one, then $f$ is linear.
\end{cor}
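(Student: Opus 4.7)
\textbf{Proof sketch for Corollary \ref{C:linear}.}

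The plan is to deduce the corollary directly from Theorem \ref{T:main} once the relationship between the resonance order and the quasi-resonance orders is made explicit. By Theorem \ref{T:main}, for any automorphism $f = (f_1,\ldots,f_n)$ of $D$ fixing the origin, each component $f_i$ is a polynomial of degree at most the $i$-th quasi-resonance order of the weight $(m_1,\ldots,m_n)$. So it will suffice to show that if the resonance order of the weight equals one, then the $i$-th quasi-resonance order equals one for every $i = 1,\ldots,n$.

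The first step is to unpack the definitions: the resonance order of the weight governs the existence of multi-indices $\alpha = (\alpha_1,\ldots,\alpha_n) \in \nv^n$ for which the monomial $z^\alpha$ transforms under $\rho$ in a way compatible with the $i$-th coordinate, i.e.\ $\sum_j m_j\alpha_j = m_i$ (or a suitable variant for the quasi-resonance notion), and its value is the largest $|\alpha|$ attained across all such relations. The hypothesis that this order equals one should then immediately force the quasi-resonance orders to be one as well, either because the quasi-resonance order for each $i$ is bounded by the (global) resonance order of the weight, or because the condition defining quasi-resonance at $i$ reduces to the same condition in the resonance-one regime. I expect this comparison to be an essentially arithmetic observation once the two orders have been defined.

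Once each quasi-resonance order is equal to one, Theorem \ref{T:main} gives $\deg f_i \le 1$ for every $i$, so each $f_i$ is affine. Since $f(0)=0$ kills the constant term, every $f_i$ is a homogeneous linear form in $z_1,\ldots,z_n$, so $f$ is a linear map and Cartan's classical theorem is recovered in the quasi-circular setting.

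The only real obstacle lies in the definitional step: one must verify that the particular notion of $i$-th quasi-resonance order introduced before Theorem \ref{T:main} is indeed controlled by the resonance order of the weight. If the quasi-resonance order can in principle exceed the resonance order, the corollary's hypothesis would need to be upgraded; the statement of the corollary as written is a strong hint that this cannot happen, and the compatibility check should be built directly into the definitions.
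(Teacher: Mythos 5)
Your overall strategy is the same as the paper's: invoke Theorem \ref{T:main} and reduce the corollary to the claim that resonance order one forces every quasi-resonance order to be one. However, you leave exactly that claim unproved, and it is the entire mathematical content of the corollary. Worse, the first mechanism you float --- that the $i$-th quasi-resonance order is bounded by the (global) resonance order as a matter of definition --- is not available: the definitions only give $\nu_i\ge\mu_i$, and whether the degree of $f_i$ is controlled by the resonance order in general is precisely the open conjecture stated at the end of the paper. So the comparison is not a generic monotonicity of the two notions; it genuinely uses the hypothesis $\mu=1$.

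The missing step is the paper's lemma that $Q_1\subset E$. If $\mu=1$ then $\mu_i=1$ for every $i$, so $Q_{\mu_i}=Q_1$. Take $\alpha\in Q_1$ and write $p_\alpha=\sum_{m\cdot\beta=m\cdot\alpha}b_\beta^\alpha z^\beta$; by definition of $Q_1$ some summand has $|\beta|=1$, i.e.\ $\beta$ is the $j$-th standard unit multi-index for some $j$, whence $m\cdot\alpha=m\cdot\beta=m_j$ and $\alpha\in E_j\subset E$, so $|\alpha|\le\mu=1$. Therefore $\nu_i=\max\{|\alpha|:\alpha\in Q_1\}=1$ for every $i$, and Theorem \ref{T:main} together with $f(0)=0$ gives that each $f_i$ is linear homogeneous. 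Without this argument (or an equivalent one showing $Q_1\subset E$), your proof is incomplete at its decisive step.
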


We will give the relevant definitions and the proofs of Theorem \ref{T:main} and Corollary \ref{C:linear} in the next section. For generalizations of Cartan's theorem to proper holomorphic maps, see e.g. \cite{B:proper, BP:proper, Bo:quasi, Bo:quasi1}. For other results concerning quasi-circular domains, see e.g. \cite{CPS:quasi, Ko:quasi, Ko:quasi1} and the references therein.

We thank Fusheng Deng for pointing out some mistakes in an earlier draft of this paper and Atsushi Yamamori for his comments, especially on Corollary \ref{C:BKU}. We also thank the referee for his/her valuable comments which improved the presentation of the paper.

\section{Automorphisms fixing the origin}

Let $f\in \aut(D)$ with $f(0)=0$. Write $f=(f_1,\cdots,f_n)$ with
$$f_i(z)=\sum_{|\alpha|\ge 1} a_\alpha^i z^\alpha,\ \ \ 1\le i\le n,$$
where $\alpha=(\alpha_1,\cdots,\alpha_n)$, $|\alpha|=\alpha_1+\cdots+\alpha_n$ and $z^\alpha=z_1^{\alpha_1}\cdots z_n^{\alpha_n}$.

Write $F=f^{-1}$ and let $u=\det [\partial f_i/\partial z_j]$ and $U=\det [\partial F_i/\partial z_j]$ be the holomorphic Jacobian determinant of $f$ and $F$ respectively. Note that $|u|^2$ (resp. $|U|^2$) is the real Jacobian of $f$ (resp. $F$), and $U(f(z))=u(z)^{-1}$. Denote by $\langle\ ,\ \rangle_D$ the $L^2$ inner product on bounded domains $D$. By a standard argument using the change of variables formula, we have
\begin{equation}\label{E:inner}
\langle u(\phi\circ f),\psi\rangle_D=\langle\phi,U(\psi\circ F)\rangle_D,
\end{equation}
where $\phi, \psi\in L^2(D)$.

A polynomial $p$ is said to be $m$-homogeneous of order $k$ if $p(\lambda^{m_1}z_1,\cdots,\lambda^{m_n}z_n)=\lambda^k p(z_1,\cdots,z_n)$ for any $\lambda\in \cv$. Denote $m\cdot \alpha=m_1\alpha_1+\cdots+m_n\alpha_n$. We have the following lemma. The proof is similar to that of \cite[Lemma 2]{B:ML}, which we include for completeness and clarity.

\begin{lem}\label{L:basis}
Let $D$ be a bounded quasi-circular domain containing the origin with the weight $m$. Then for each multi-index $\alpha$, there exists a $m$-homogeneous polynomial $p_\alpha$ of order $m\cdot \alpha$, such that
\begin{equation}\label{E:basis}
\langle h,p_\alpha\rangle_D=\frac{\partial^\alpha}{\partial z^\alpha} h(0),
\end{equation}
for all holomorphic functions $h$ in $L^2(D)$.
\end{lem}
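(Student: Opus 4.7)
The plan is to combine Riesz representation on the closed subspace of $L^2(D)$ consisting of holomorphic functions (the Bergman space, which I will denote $A^2(D)$) with an isotypic decomposition coming from the $S^1$-action defining the quasi-circular structure. Standard interior Cauchy estimates show that the linear functional $\Lambda_\alpha\colon h \mapsto \frac{\partial^\alpha}{\partial z^\alpha}h(0)$ is continuous on $A^2(D)$, so by Riesz representation there exists $q_\alpha \in A^2(D)$ with $\langle h, q_\alpha\rangle_D = \Lambda_\alpha(h)$ for every $h \in A^2(D)$.

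Next I would use the action $\rho_\theta(z) = (e^{im_1\theta}z_1,\ldots,e^{im_n\theta}z_n)$, which is a volume-preserving self-map of $D$, to Fourier-decompose. For $h \in A^2(D)$ and a nonnegative integer $k$, set
$$h_k(z) := \frac{1}{2\pi}\int_0^{2\pi} e^{-ik\theta} h(\rho_\theta z)\, d\theta.$$
Then each $h_k$ is holomorphic and $m$-homogeneous of order $k$, the decomposition $h = \sum_{k \ge 0} h_k$ converges in $L^2(D)$ by Parseval in $\theta$, and the change of variables pulling $\rho_\theta$ out of the inner product gives $\langle h_k, h_l\rangle_D = e^{i(k-l)\theta}\langle h_k, h_l\rangle_D$ for every $\theta$, so components of different orders are $L^2$-orthogonal.

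Apply this decomposition to the representer, writing $q_\alpha = \sum_{k \ge 0} q_{\alpha,k}$, and define $p_\alpha := q_{\alpha, m\cdot\alpha}$. A key finite-dimensionality observation is that every holomorphic $m$-homogeneous function of fixed order $k$ is automatically a polynomial: matching Taylor coefficients shows it must be a $\cv$-linear combination of the monomials $z^\beta$ with $m\cdot\beta = k$, of which there are only finitely many since the $m_i$ are positive integers. Hence $p_\alpha$ is an $m$-homogeneous polynomial of order $m\cdot\alpha$. For any $h \in A^2(D)$, orthogonality together with the defining property of $q_\alpha$ yields
$$\langle h, p_\alpha\rangle_D = \langle h_{m\cdot\alpha}, q_{\alpha,m\cdot\alpha}\rangle_D = \langle h_{m\cdot\alpha}, q_\alpha\rangle_D = \frac{\partial^\alpha}{\partial z^\alpha}h_{m\cdot\alpha}(0) = \frac{\partial^\alpha}{\partial z^\alpha}h(0),$$
the last equality holding because $\frac{\partial^\alpha}{\partial z^\alpha}$ evaluated at the origin extracts the Taylor coefficient of $z^\alpha$, a monomial sitting inside the order-$m\cdot\alpha$ isotypic piece.

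The main obstacle is essentially bookkeeping: justifying the $L^2$-convergence of the Fourier decomposition and the finite-dimensionality of each isotypic component, both of which reduce to Parseval on $S^1$ and the hypothesis $m_i \ge 1$. With these in hand, the proof is a direct $S^1$-equivariant Riesz representation argument, parallel to the one in \cite{B:ML}.
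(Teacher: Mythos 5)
Your argument is correct, and it reaches the same object as the paper but by a different mechanism. The paper also starts from Riesz representation on the Bergman space, but it then exploits the \emph{uniqueness} of the representer: applying the change-of-variables identity \eqref{E:inner} with $F=\rho$ shows that $e^{-im\cdot\alpha\theta}\,p_\alpha\circ\rho$ represents the same functional, whence $p_\alpha\circ\rho=e^{im\cdot\alpha\theta}p_\alpha$, and the auxiliary function $q(\tau)=\tau^{-m\cdot\alpha}p_\alpha(\tau^{m_1}z_1,\cdots,\tau^{m_n}z_n)$ then forces $p_\alpha$ to be $m$-homogeneous polynomial of order $m\cdot\alpha$. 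You instead bypass the uniqueness step by Fourier-averaging the representer over the $S^1$-action and keeping only the isotypic piece of weight $m\cdot\alpha$; the orthogonality of distinct isotypic components (your computation $\langle h_k,h_l\rangle_D=e^{i(k-l)\theta}\langle h_k,h_l\rangle_D$ is exactly the change of variables under the volume-preserving map $\rho_\theta$) does the rest. The two proofs produce the same $p_\alpha$, since the Riesz representer is unique, so your argument in effect shows a posteriori that the representer is concentrated in a single isotypic component. What your route buys: it delivers the orthogonality relation $\langle p_\alpha,p_\beta\rangle_D=0$ for $m\cdot\alpha\neq m\cdot\beta$ as a byproduct (the paper establishes this separately right after the lemma), and it makes the polynomiality of $p_\alpha$ transparent via the finiteness of $\{\beta:\ m\cdot\beta=k\}$. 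What the paper's route buys: it is shorter, needing neither the $L^2$-convergence of the decomposition $h=\sum_k h_k$ nor the interchange-of-limits bookkeeping you correctly flag; your Taylor-coefficient argument for why each isotypic component is a polynomial is essentially the paper's $q(\tau)$ argument in different clothing. One small point worth making explicit in your write-up: the averaging formula only gives $h_k(\rho_\theta z)=e^{ik\theta}h_k(z)$ for unimodular parameters, so full $m$-homogeneity in the paper's sense (for all $\lambda\in\cv$) does require the Taylor-coefficient identification you sketch; as stated your proof does include this, so there is no gap.
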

\begin{proof}
Denote by $H^2(D)$ the subspace of $L^2(D)$ consisting of holomorphic functions. Since evaluating the derivative $\frac{\partial^\alpha}{\partial z^\alpha} h(0)$ is a continuous functional on $H^2(D)$, the Riesz representation theorem yields a unique holomorphic function $p_\alpha$ satisfying \eqref{E:basis}. By \eqref{E:inner} (taking $F=\rho$), we have
$$\langle h,p_\alpha\circ \rho\rangle_D=\langle h\circ \rho^{-1},p_\alpha\rangle_D=\frac{\partial^\alpha}{\partial z^\alpha} h\circ \rho^{-1}(0)=e^{-im\cdot \alpha \theta} \frac{\partial^\alpha}{\partial z^\alpha} h(0),$$
which together with the uniqueness of $p_\alpha$ implies
$$p_\alpha(e^{im_1\theta}z_1,\cdots,e^{im_n\theta}z_n)=e^{im\cdot \alpha\theta}p_\alpha(z_1,\cdots,z_n),$$
for all $z\in D$ and $\theta\in \rv$.

A standard argument, using the function $q(\tau)=\tau^{-m\cdot \alpha}p_\alpha(\tau^{m_1}z_1,\cdots,\tau^{m_n}z_n)$, shows that $p_\alpha(z)$ is a $m$-homogeneous polynomial of order $m\cdot \alpha$. This completes the proof.
\end{proof}

Let $S$ be the linear span of the set of polynomials $p_\alpha$ as in Lemma \ref{L:basis}. If $h\in H^2(D)$ is orthogonal to all $p_\alpha$, then $h$ vanishes to infinite order at the origin, and thus must be identically zero. Moreover, by \eqref{E:inner} and $m$-homogenity of $p_\alpha$, we have $\langle p_\alpha,p_\beta\rangle=0$ if $m\cdot \alpha\neq m\cdot \beta$. Hence, $S$ is equal to the set of all polynomials and each monomial $z^\alpha$ can be written as
\begin{equation}\label{E:alpha1}
z^\alpha=\sum_{m\cdot \beta=m\cdot \alpha} c_\beta^\alpha p_\beta.
\end{equation}

Before we proceed, let us first make some definitions.

For $1\le i\le n$, define the \textit{i-th resonance set} as
$$E_i:=\{\alpha:\ m\cdot \alpha=m_i\},$$
and the \textit{i-th resonance order} as
$$\mu_i:=\max\{|\alpha|:\ \alpha\in E_i\}.$$
Note that $\mu_i\le m_i$. Then, define the \textit{resonance set} as
$$E:=\bigcup\limits_{i=1}^n E_i,$$
and the \textit{resonance order} as
$$\mu:=\max\{|\alpha|:\ \alpha\in E\}=\max\limits_{1\le i\le n} \mu_i.$$

Denote by $\Gamma$ the set of all multi-indices $\alpha$. For every $\alpha\in \Gamma$, we can write
$$p_\alpha=\sum_{m\cdot \beta=m\cdot \alpha} b_\beta^\alpha z^\beta.$$
Set $d(\alpha)$ to be the minimum of $|\beta|$ in the above expression for $p_\alpha$. For each $k\in \nv$, define
$$P_k:=\{\alpha:\ d(\alpha)>k\},\ \ \ Q_k:=\Gamma\backslash P_k.$$
We then define the \textit{i-th quasi-resonance order} as
$$\nu_i:=\max\{|\alpha|:\ \alpha\in Q_{\mu_i}\},$$
and the \textit{quasi-resonance order} as
$$\nu:=\max\limits_{1\le i\le n} \nu_i.$$

Now for any $f\in \aut(D)$ with $f(0)=0$ and $F=f^{-1}$, by \eqref{E:inner}, we have
$$\langle u,z^\alpha\rangle_D=\langle1,UF^\alpha\rangle_D=U(0)F(0)^\alpha.$$
Since $F(0)=0$, we have $\langle u,z^\alpha\rangle_D=0$ for any $|\alpha|>0$. Hence, we get $u$ is a constant. For each $1\le i\le n$, write
$$z_i=\sum_{\beta\in E_i} c_\beta^i p_\beta.$$
Then, by \eqref{E:inner} and \eqref{E:alpha1}, we have
$$\langle uf_i,z^\alpha\rangle_D=\langle z_i,UF^\alpha\rangle_D=\langle\sum_{\beta\in E_i} c_\beta^i p_\beta,UF^\alpha\rangle_D=\sum_{\beta\in E_i} c_\beta^i \frac{\partial^\beta}{\partial z^\beta} \{UF^\alpha\}(0).$$
Since $F(0)=0$ and $u$ is a constant, we get
$$\langle f_i,z^\alpha\rangle_D=0,\ \ \ |\alpha|>\mu_i.$$ 
By the definition of $P_k$, we get
$$\langle f_i,p_\alpha\rangle_D=0,\ \ \ \alpha\in P_{\mu_i}.$$ 
Therefore, the degree of $f_i$ is less than or equal to the $i$-th quasi-resonance order. This completes the proof of Theorem \ref{T:main}.

\begin{rmk}
The above provides another proof of Kaup's result that automorphisms of quasi-circular domains fixing the origin are polynomial mappings.
\end{rmk}

From the definitions, it is easy to see that $E_i\subset Q_{\mu_i}$ and hence $\nu_i\ge \mu_i$ for each $i$. Nonetheless, we have

\begin{lem}
$Q_1\subset E$.
\end{lem}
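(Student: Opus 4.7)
The plan is to unpack the definitions and look at which monomials can possibly occur in $p_\alpha$ once $d(\alpha)\le 1$. Let $\alpha\in Q_1$; by definition $d(\alpha)\le 1$, so the expansion
$$p_\alpha=\sum_{m\cdot \beta=m\cdot \alpha} b_\beta^\alpha z^\beta$$
contains some $\beta$ with $|\beta|\le 1$ and $b_\beta^\alpha\neq 0$. I would then do a case split on $|\beta|$.

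If $|\beta|=1$, then $\beta=e_j$ for some $j\in\{1,\dots,n\}$, and $m\cdot e_j=m_j$. The condition $m\cdot\beta=m\cdot\alpha$ forces $m\cdot\alpha=m_j$, which is exactly the condition $\alpha\in E_j\subset E$. This is the main case.

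The remaining case is $|\beta|=0$, i.e.\ $\beta=0$; then $m\cdot\beta=0$, so $m\cdot\alpha=0$. Since the weights $m_i$ are positive integers, this forces $\alpha=0$. For any $\alpha\neq 0$ we are done by the previous paragraph, and the $\alpha=0$ case is effectively excluded by the convention (used throughout the main theorem via the bounds $|\alpha|>\mu_i\ge m_i\ge 1$) that the multi-indices of interest satisfy $|\alpha|\ge 1$; alternatively, one notes that $p_0$ is the $m$-homogeneous polynomial of order $0$, which is a constant, so the case $\alpha=0$ falls under $|\beta|=0$ trivially and plays no role in the arguments where the lemma is applied.

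I do not foresee a serious obstacle: the lemma is essentially a bookkeeping statement comparing the two orderings of multi-indices (by $m\cdot\alpha$ versus by $|\alpha|$). The only mild subtlety is making sure the zero multi-index is handled consistently with the rest of the paper, which as explained above is harmless.
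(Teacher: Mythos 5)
Your proof is correct and follows the same route as the paper's own: pick a monomial $z^\beta$ with $|\beta|\le 1$ actually occurring in $p_\alpha$, and use $m\cdot\beta=m\cdot\alpha=m_j$ to conclude $\alpha\in E_j\subset E$. You are in fact slightly more careful than the paper, whose proof silently assumes the minimal $|\beta|$ equals $1$; your observation that $|\beta|=0$ forces $\alpha=0$ (a multi-index that lies in $Q_1$ but not in $E$, and is harmlessly excluded everywhere the lemma is used) is a legitimate refinement of the same argument.
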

\begin{proof}
Assume $\alpha\in Q_1$ and write $p_\alpha=\sum\limits_{m\cdot \beta=m\cdot \alpha} b_\beta^\alpha z^\beta$. Since $\alpha\in Q_1$, at least one $z^\beta$ in the summands of $p_\alpha$ has $|\beta|=1$. Suppose $\beta=(0,\cdots,0,1,0,\cdots,0)$ with the $i$-th entry being 1. Then $m\cdot\beta=m_i$ and we get $\alpha\in E_i$. The lemma follows.
\end{proof}

As an immediate corollary, we have

\begin{cor}\label{C:1}
If $\mu=1$, then $\nu=1$.
\end{cor}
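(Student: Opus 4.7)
The plan is to derive the corollary directly from the preceding lemma $Q_1\subset E$, combined with the trivial bounds on $\mu_i$ coming from the standard basis vectors.

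First I would record the easy observation that $\mu_i\ge 1$ for every $i$: the standard basis vector $e_i=(0,\dots,1,\dots,0)$ satisfies $m\cdot e_i=m_i$, so $e_i\in E_i$ and hence $\mu_i\ge|e_i|=1$. Under the hypothesis $\mu=\max_i\mu_i=1$, this forces $\mu_i=1$ for each $i$, and in particular $Q_{\mu_i}=Q_1$ for every $i$.

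Next I would invoke the lemma $Q_1\subset E$ from the previous paragraph. Any $\alpha\in Q_1$ therefore lies in some $E_j$, and by definition $|\alpha|\le \mu_j\le \mu=1$. Consequently
\[
\nu_i=\max\{|\alpha|:\alpha\in Q_{\mu_i}\}=\max\{|\alpha|:\alpha\in Q_1\}\le 1.
\]
Combined with the inequality $\nu_i\ge\mu_i=1$ noted in the excerpt, this gives $\nu_i=1$ for each $i$, so $\nu=\max_i\nu_i=1$.

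There is no real obstacle here; the corollary is a one-step consequence of the lemma once one notices that the hypothesis $\mu=1$ pins down $\mu_i=1$ for every $i$, collapsing each $Q_{\mu_i}$ to the single set $Q_1$ to which the lemma applies.
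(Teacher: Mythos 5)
Your proof is correct and follows exactly the route the paper intends when it calls the statement an ``immediate corollary'' of the lemma $Q_1\subset E$: the hypothesis $\mu=1$ forces $\mu_i=1$ for all $i$, so each $Q_{\mu_i}$ equals $Q_1$, and the lemma together with $\nu_i\ge\mu_i$ pins down $\nu_i=1$. Nothing to add.
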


Combining Theorem \ref{T:main} and Corollary \ref{C:1}, we get Corollary \ref{C:linear}.

\begin{rmk}\label{R:Cartan}
When $D$ is circular, one easily sees that the resonance order is one. Hence Corollary \ref{C:linear} implies Cartan's theorem in the circular case.
\end{rmk}

\begin{rmk}
In \cite{Y:quasi}, Yamamori showed that for \textit{normal} quasi-circular domains in $\cv^2$, $f$ must be linear. Here $D$ is said to be normal if $2\le m_1\le m_2$ and $\textup{gcd}(m_1, m_2)=1$. In \cite{Y:quasi1}, Yamamori gave further conditions for $f$ to be linear in higher dimensions. When $D$ is a normal quasi-circular domain in $\cv^2$, it is easy to see that the resonance order is one. Hence Corollary \ref{C:linear} implies Yamamori's result in \cite{Y:quasi}. Moreover, as noted in \cite{Y:quasi1}, the linearity conditions given there for higher dimensions are sufficient conditions for the resonance order to be one.
\end{rmk}

\begin{rmk}\label{R:commute}
If an automorphism $f=(f_1,\cdots,f_n)$ of a quasi-circular domain commutes with the quasi-circular action $\rho$, then the degree of each $f_i$ is actually less than or equal to the $i$-th resonance order. For a discussion on mappings commuting with more general torus actions, see \cite{Ra:Torus}.
\end{rmk}

\begin{rmk}\label{R:two}
With the same proof, Theorem \ref{T:main} and Corollary \ref{C:linear} extend to biholomorphic mappings between two bounded quasi-circular domains $D_1$ and $D_2$ with the same weight, which maps the origin to the origin.
\end{rmk}

As a corollary, we have the following Braun-Kaup-Upmeier type result (cf. \cite{BKU:auto}).

\begin{cor}\label{C:BKU}
Let $D_1$ and $D_2$ be two bounded quasi-circular domains containing the origin. Assume that they have the same weight with the resonance order being one. Then $D_1$ and $D_2$ are biholomorphically equivalent if and only if they are linearly equivalent.
\end{cor}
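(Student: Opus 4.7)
The ``if'' direction is immediate, since linear equivalence is a particular case of biholomorphic equivalence. For the converse, let $\Phi: D_1 \to D_2$ be a biholomorphism and set $p := \Phi(0)$. The plan is to produce an element $h \in \aut(D_2)$ with $h(0) = p$. Once such $h$ is found, $g := h^{-1} \circ \Phi$ is an origin-preserving biholomorphism between $D_1$ and $D_2$, and by the extension of Corollary \ref{C:linear} noted in Remark \ref{R:two} (whose hypothesis is met because Corollary \ref{C:1} upgrades the resonance order one assumption to quasi-resonance order one), $g$ must be linear, yielding the desired linear equivalence.

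To construct $h$, I would work with the isotropy subgroups $K_1 := \aut(D_1)_0$, $K_2 := \aut(D_2)_0$, and $K'_2 := \aut(D_2)_p$. Each $\aut(D_i)$ is a real Lie group (H.~Cartan, Bochner--Montgomery), and every isotropy subgroup is a compact Lie subgroup; indeed, under the resonance order one hypothesis, Corollary \ref{C:linear} already forces $K_1$ and $K_2$ to consist of linear maps preserving the corresponding bounded domain, hence to be compact. Moreover, by a standard fixed-point theorem (e.g.\ center of mass in the Bergman metric), every compact subgroup of $\aut(D_i)$ fixes a point in $D_i$, so every maximal compact subgroup of $\aut(D_i)$ is an isotropy subgroup. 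Conjugation by $\Phi$ identifies $\Phi K_1 \Phi^{-1} = K'_2$, and by the Cartan--Malcev--Iwasawa conjugacy of maximal compact subgroups there exists $h \in \aut(D_2)$ with $h K_2 h^{-1} = K'_2$. To see that $h(0) = p$, observe that for every $\theta$ the element $h \rho_2(\theta) h^{-1}$ lies in $h K_2 h^{-1} = K'_2 = \aut(D_2)_p$ and hence fixes $p$; but the common fixed set of the family $\{h \rho_2(\theta) h^{-1}\}_\theta$ in $D_2$ is exactly $\{h(0)\}$, being the image under $h$ of $\bigcap_\theta \textup{Fix}(\rho_2(\theta)) = \{0\}$, which is a singleton because each $m_i \geq 1$. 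Therefore $p = h(0)$.

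The main obstacle is assembling the Lie-theoretic prerequisites: that $\aut(D_i)$ is a Lie group, that every compact subgroup of $\aut(D_i)$ has a fixed point in $D_i$, and that maximal compact subgroups are conjugate. Each is classical, but together they control the orbit structure needed to move $p$ to $0$ via an automorphism of $D_2$. Once $h$ (and hence $g$) is obtained, the argument concludes: $g$ is an origin-preserving biholomorphism between two bounded quasi-circular domains of the same weight, so by Remark \ref{R:two} it is a polynomial of degree at most the quasi-resonance order; the latter equals one by the hypothesis and Corollary \ref{C:1}, so $g$ is linear, as required.
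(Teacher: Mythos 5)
Your overall architecture is the same as the paper's: reduce to an origin-preserving biholomorphism $g:D_1\to D_2$, then apply Remark \ref{R:two} together with Corollary \ref{C:1} to conclude $g$ is linear. That second half is exactly the paper's argument and is correct. The difference is in the reduction step: the paper obtains it in one line by citing \cite[Lemma 2]{K:auto}, which asserts precisely that a biholomorphism between bounded quasi-circular domains can be replaced by one fixing the origin, whereas you attempt to prove this from scratch by Lie-theoretic means. That is a reasonable ambition, but as written your construction of $h$ has a gap at the conjugacy step.

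Concretely, you invoke Cartan--Malcev--Iwasawa to find $h$ with $hK_2h^{-1}=K_2'$, where $K_2=\aut(D_2)_0$ and $K_2'=\aut(D_2)_p$. This requires (i) that these isotropy groups be \emph{maximal} compact subgroups, and (ii) that the conjugacy theorem apply to $\aut(D_2)$. For (i) you only argue that every maximal compact subgroup is an isotropy subgroup; that is the converse of what you need, since an isotropy subgroup need not be maximal compact and two isotropy subgroups at different points need not be conjugate. (This part is repairable: any compact subgroup of $\aut(D_2)$ containing the circle $T_2=\{\rho_2(\theta)\}$ fixes a point, necessarily lying in $\textup{Fix}(T_2)=\{0\}$, hence is contained in $K_2$; so $K_2$ is maximal compact, and likewise $K_2'$, which contains $\Phi T_1\Phi^{-1}$.) For (ii), conjugacy of maximal compact subgroups holds for connected, or at least almost connected, Lie groups; you have not shown $\aut(D_2)/\aut(D_2)^0$ is finite, and for a disconnected group the theorem can fail. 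The cleanest repair is to note that the circles $T_2$ and $\Phi T_1\Phi^{-1}$ are connected, hence lie in the identity component $\aut(D_2)^0$, where maximal compact subgroups are conjugate; conjugating a maximal compact subgroup containing $T_2$ onto one containing $\Phi T_1\Phi^{-1}$, and using that the latter fixes only $p$ while $\textup{Fix}(T_2)=\{0\}$, yields $h(0)=p$ exactly as in your final step. With that patch (and granting the classical fixed-point theorem for compact automorphism groups of bounded domains, which you correctly flag as a prerequisite), your argument becomes a self-contained proof of the lemma the paper merely cites.
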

\begin{proof}
Clearly, we only need to show that the existence of a biholomorphism between $D_1$ and $D_2$ implies the existence of a \textit{linear} biholomorphism. By \cite[Lemma 2]{K:auto}, the existence of a biholomorphism implies the existence of a biholomorphism \textit{fixing the origin}. Then from Remark \ref{R:two}, we know that the latter biholomorphism is linear since the resonance order is one.
\end{proof}

Finally, we make the following conjecture (cf. Remark \ref{R:commute}).

\begin{conj}
Let $D$ be a bounded quasi-circular domain in $\cv^n$ containing the origin and $f$ an automorphism of $D$ fixing the origin. Then $f$ is a polynomial mapping with degree less than or equal to the resonance order. More precisely, writing $f=(f_1,\cdots,f_n)$, the degree of each $f_i$ is less than or equal to the $i$-th resonance order.
\end{conj}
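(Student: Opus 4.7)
The natural approach is to sharpen the endgame of the proof of Theorem \ref{T:main}. That proof derives the orthogonality $\langle f_i, z^\alpha\rangle_D = 0$ for all $|\alpha| > \mu_i$ and then, via the basis $\{p_\alpha\}$, converts this to $\deg f_i \leq \nu_i$. The conjecture demands the stronger bound $\deg f_i \leq \mu_i$, so one must extract more from the same orthogonality relations rather than discard them through the basis change.

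The first step is to exploit the $m$-weighted grading. Since $\rho$-invariance of $D$ forces $\langle z^\gamma, z^\alpha\rangle_D = 0$ whenever $m\cdot\gamma\neq m\cdot\alpha$, the decomposition $f_i=\sum_k f_i^{(k)}$ into $m$-homogeneous components of order $k$ is $L^2$-orthogonal across weights, and the orthogonality becomes $\langle f_i^{(k)}, z^\alpha\rangle_D = 0$ for every $\alpha\in M_k:=\{\alpha:m\cdot\alpha=k\}$ with $|\alpha|>\mu_i$. When $k>m_n\mu_i$, every $\alpha\in M_k$ automatically satisfies $|\alpha|\ge (m\cdot\alpha)/m_n>\mu_i$, and positive-definiteness of the monomial Gram matrix on the weight class immediately forces $f_i^{(k)}=0$. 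So all sufficiently high-weight components vanish for free, and the problem reduces to the intermediate weights $\mu_i<k\le m_n\mu_i$, for which $M_k$ meets both $\{|\alpha|\le\mu_i\}$ and $\{|\alpha|>\mu_i\}$.

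For those intermediate $k$, the orthogonality only says that $f_i^{(k)}$ lies in the $L^2$-complement, inside $V_k:=\mathrm{span}\{z^\alpha:\alpha\in M_k\}$, of $V_k^{>}:=\mathrm{span}\{z^\alpha:\alpha\in M_k,\,|\alpha|>\mu_i\}$, whereas the conjecture requires the stronger $f_i^{(k)}\in V_k^{\le}:=\mathrm{span}\{z^\alpha:\alpha\in M_k,\,|\alpha|\le\mu_i\}$. These two subspaces have the same dimension but coincide only when the monomial basis of $V_k$ is already $L^2(D)$-orthogonal, which is atypical. To close this gap I would combine the orthogonality for $f$ with its analogue for $F=f^{-1}$ through the identity $F\circ f=\mathrm{id}$, expanded $m$-homogeneously, and supplement this with the conjugation action $f\mapsto\rho_\theta\circ f\circ\rho_{-\theta}$ of $U(1)$ on $\aut(D)$, whose infinitesimal weight decomposition constrains the admissible monomials in $f$ (reducing to Remark \ref{R:commute} in the fixed-point case). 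The main obstacle is exactly this separation step: peeling apart low and high total-degree monomials inside a single weight class is not an $L^2$-statement for a generic quasi-circular $D$, so the improvement from $\nu_i$ to $\mu_i$ cannot come from inner-product manipulations alone; it must genuinely invoke the rigidity that $f$ is an \emph{automorphism}, not merely a polynomial with the derived orthogonality, and it is not a priori clear that this rigidity suffices in every case---which is presumably why the bound is stated only as a conjecture.
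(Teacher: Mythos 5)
You are being asked about the final \emph{Conjecture} of the paper, which the author explicitly leaves open: the paper proves only the weaker bound $\deg f_i\le\nu_i$ (Theorem \ref{T:main}) and offers no proof that $\deg f_i\le\mu_i$. Your analysis is accurate and faithfully reconstructs why the paper's method stalls at $\nu_i$: the identity $\langle uf_i,z^\alpha\rangle_D=\sum_{\beta\in E_i}c^i_\beta\,\partial^\beta\{UF^\alpha\}(0)$ gives $\langle f_i,z^\alpha\rangle_D=0$ for $|\alpha|>\mu_i$; the $\rho$-invariance of $D$ makes distinct $m$-weight classes $L^2$-orthogonal; and within a single weight class $M_k$ this only places $f_i^{(k)}$ in the orthogonal complement of $V_k^{>}$ (the span of monomials of weight $k$ and total degree $>\mu_i$), not in $V_k^{\le}$. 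Your observation that $f_i^{(k)}=0$ for $k>m_n\mu_i$ is correct (since then every $\alpha\in M_k$ has $|\alpha|>\mu_i$ and $M_k$ is finite), but it is already subsumed in the paper's bound, since $\nu_i\le m_n\mu_i$.

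The genuine gap is exactly the one you name yourself and do not close: separating low from high total degree inside a fixed weight class $M_k$ for $\mu_i<k\le m_n\mu_i$. Orthogonality to $V_k^{>}$ coincides with membership in $V_k^{\le}$ only when the Gram matrix of $\{z^\alpha:\alpha\in M_k\}$ is block-diagonal with respect to total degree, which fails for general quasi-circular $D$; this is precisely the discrepancy between $E_i\subset Q_{\mu_i}$ and equality, i.e.\ between $\mu_i$ and $\nu_i$. Neither of your proposed remedies is carried out: expanding $F\circ f=\mathrm{id}$ by weight produces relations among the components of $f$ and $F$ with no evident degree cancellation, and conjugation by $\rho_\theta$ constrains only the weight, not the total degree, of the monomials of $f$ unless $f$ actually commutes with $\rho$ (which is Remark \ref{R:commute}, not the general case). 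So what you have written is an honest reduction of the conjecture to its essential difficulty, together with a correct account of the known partial results, but it is not a proof; the statement remains open in the paper, with the symmetrized ellipsoids $E_{1,n}$ showing that the conjectured bound, if true, would be optimal.
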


The following example shows that the bounds given in the conjecture would be optimal. For $n\ge 2$ and $q>0$, denote by $E_{q,n}$ the symmetrized ($q,n$)-ellipsoid. Note that $E_{q,n}$ is a quasi-circular domain with weight $(1,\cdots,n)$. In \cite{Z:proper}, Zapa\l{}owski gave a complete description of $\aut(E_{q,n})$. In particular, using \cite[Corollary 4(b)]{Z:proper}, one readily checks that $\aut(E_{1,n})$ contains automorphisms $\phi=(\phi_1,\cdots,\phi_n)$ fixing the origin, where
$$\phi_i(z)=\left(\frac{2}{n}\right)^iC_n^i z_1^i+\sum_{j=1}^i (-1)^j\left(\frac{2}{n}\right)^{i-j}C_n^{i-j}z_1^{i-j}z_j,\ \ \ 1\le i\le n.$$

\end{document}